\newcommand\blfootnote[1]{%
  \begingroup
  \renewcommand\thefootnote{}\footnote{#1}%
  \addtocounter{footnote}{-1}%
  \endgroup
}
\begin{document}
\title[Wall-crossing for iterated Hilbert schemes]{Wall-crossing for iterated Hilbert schemes (or `Hilb of Hilb')}
\vs
\maketitle

\begin{abstract} We study wall-crossing phenomena in the McKay correspondence. Craw--Ishii show that every projective crepant resolution of a Gorenstein abelian quotient singularity arises as a moduli space of $\theta$-stable representations of the McKay quiver. The stability condition $\theta$ moves in a vector space with a chamber decomposition in which (some) wall-crossings capture flops between different crepant resolutions. We investigate where chambers for certain resolutions with Hilbert scheme-like moduli interpretations -- iterated Hilbert schemes, or `Hilb of Hilb' -- sit relative to the principal chamber defining the usual $G$-Hilbert scheme. We survey relevant aspects of wall-crossing, pose our main conjecture, prove it for some examples and special cases, and discuss connections to other parts of the McKay correspondence.
\end{abstract}

\section{Introduction}

\subsection{Overview}

The McKay correspondence\blfootnote{MSC 2020: 14E16 (primary), 14M25, 14J17 (secondary)} studies minimal or crepant resolutions of Gorenstein quotient singularities $\C^n/G$ for $G\subseteq\on{SL}_n(\C)$ a finite subgroup. In dimensions $2$ and $3$, where there is the guarantee of at least one crepant resolution, there are established connections between the geometry of such resolutions and representation theory (either directly of the group $G$, or of related objects) \cite{av_ref_85,bd_stro_96,in_mck_00,mck_car_81,rei_cor_02}. We will subsequently focus on dimension $3$.

The first crepant resolution of $\C^3/G$ that was studied in detail is the \emph{$G$-Hilbert scheme} $G\hilb\C^3$. This space is the moduli space of $G$-clusters -- $0$-dimensional, $G$-invariant subschemes $Z\subseteq\C^3$ with $H^0(\mO_Z)\cong\C[G]$ as $G$-modules -- and was shown to be smooth for all $G\subseteq\on{SL}_3(\C)$ in \cite{bkr_mck_01} after the abelian case was studied explicitly in \cite{nak_hil_01}.

Quivers and their representation theory have been known to enter the picture for some time \cite{cra_mck_01,bk_mck_04,dec_dih_12}. One notable instance is \cite[Thm.~1.1]{ci_flo_04} where it is shown that for abelian $G$ every crepant resolution of $\C^3/G$ can be realised as a moduli space of quiver representations. The quiver in this situation is the \emph{McKay quiver} $Q_G$, which is built out of the representation theory of $G$. Namely, $Q_G$ has a vertex for each irreducible representation of $G$ and the number of arrows between two vertices $\rho$ and $\rho'$ is given by
$$\on{dim}\on{Hom}_G(\rho',\rho\otimes\C^3)$$
which is the multiplicity of $\rho'$ in the decomposition of $\rho\otimes\C^3$ into irreducible representations.

The focus of this paper is in developing an understanding of how different crepant resolutions of $\C^3/G$ are related, both geometrically by flops and more delicately by GIT wall-crossing as in \cite{ci_flo_04}, with especial focus on the \emph{iterated Hilbert schemes} (or \emph{`Hilb of Hilb'}) studied in \cite{iin_gnh_13}. We will describe our approach in \S\ref{sec:wall}-\ref{sec:it_hilb}, state our main conjecture that epitomises it in \S\ref{sec:conj}, and work out this conjecture in some examples and special cases in \S\ref{sec:ex}.

\subsection{Toric geometry}

When $G$ is abelian the singularity $\C^3/G$ and its crepant resolutions are toric $3$-folds, enabling one to use combinatorial methods to examine them. We briefly recall the setup for toric geometry and fix notation that we will use throughout the paper. We will write $G=\frac{1}{r}(a,b,c)$ to mean that $G\cong\Z/r$ is generated by
$$\mat{ccc}
\eps^a \\
& \eps^b \\
& & \eps^c\tam$$
where $\eps$ is a primitive $r$th root of unity. We will assume $a+b+c\equiv0\on{mod}{r}$; in other words, that $G\subseteq\on{SL}_3(\C)$.

Let $N=\Z^3$ and $N'=\Z^3+\Z\cdot(\tfrac{a}{r},\tfrac{b}{r},\tfrac{c}{r})$. Let $\sigma$ (resp.~$\sigma'$) denote the cone in $N_\R$ (resp.~in $N'_\R$) generated by the standard basis vectors $e_1=(1,0,0),e_2=(0,1,0),e_3=(0,0,1)$. The toric variety $U_\sigma$ associated to $\sigma$ is $\C^3$, and to $\sigma'$ is $\C^3/G$ with the inclusion of lattices $N\subseteq N'$ inducing the quotient map $\C^3\to\C^3/G$. We refer to \cite{cls_tor_11} for general information on toric varieties. Denote by $\Delta_1$ the slice $\sigma'\cap(e^1+e^2+e^3=1)$, where $e^i$ are the dual basis of $N^\vee_\R$ to $e_i$. We call $\Delta_1$ the \emph{junior simplex}. With this setup a crepant resolution of $\C^3/G$ corresponds to a triangulation of $\Delta_1$ with vertices in $N'$ such that each triangle is unimodal, meaning that its vertices form a $\Z$-basis of $N'$.

Let $\pi\colon Y\to\C^3/G$ be a crepant resolution and let $\mathcal{T}$ be the corresponding triangulation. We have the following correspondences between the geometry of $Y$ and the combinatorics of $\mathcal{T}$:
\begin{itemize}
\item torus-invariant exceptional curves in $Y\longleftrightarrow$ edges in $\mathcal{T}$
\item torus-invariant exceptional divisors in $Y\longleftrightarrow$ vertices in $\mathcal{T}$
\item torus-invariant compact exceptional divisors in $Y\longleftrightarrow$ interior vertices in $\mathcal{T}$
\item torus-fixed points in $Y\longleftrightarrow$ triangles in $\mathcal{T}$
\end{itemize}

The combinatorial avatar of the flop in a torus-invariant exceptional curve $C$ corresponds to \emph{flipping} the edge corresponding to $C$ in $\mathcal{T}$ as shown in Fig.~\ref{fig:flip}.

\begin{figure}[h]
\begin{center}
\begin{tikzpicture}[scale=1.4]
\foreach \i in {1,...,2}
{
\node (a\i) at (0,\i){\tiny$\bullet$};
}
\foreach \i in {1,...,2}
{
\node (b\i) at (1,\i+0.5){\tiny$\bullet$};
}

\draw[line width = 1.2pt](a1.center) to (b2.center);
\draw (a1.center) to (a2.center);
\draw (a1.center) to (b1.center);
\draw (a2.center) to (b2.center);
\draw (b1.center) to (b2.center);

\foreach \i in {1,...,2}
{
\node (a\i) at (2,\i){\tiny$\bullet$};
}
\foreach \i in {1,...,2}
{
\node (b\i) at (3,\i+0.5){\tiny$\bullet$};
}

\draw[line width = 1.2pt](a2.center) to (b1.center);
\draw (a1.center) to (a2.center);
\draw (a1.center) to (b1.center);
\draw (a2.center) to (b2.center);
\draw (b1.center) to (b2.center);
\end{tikzpicture}
\end{center}
\caption{Flipping an edge}
\label{fig:flip}
\end{figure}
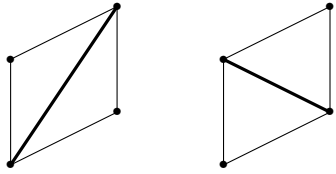

Craw--Reid \cite{cr_how_02} describe an algorithm for computing the triangulation for $G\hilb\C^3$, which works by dividing the junior simplex into a collection of `regular triangles' \cite[\S1.2]{cr_how_02} and then applying a standard subdivision to each of these pieces.

\subsection{Quiver representations}

Let $Q=(Q_0,Q_1)$ be a quiver, with vertex set $Q_0$ and arrow set $Q_1$. For an arrow $\alpha\in Q_1$ we denote by $h(\alpha)$ and $t(\alpha)$ its head and its tail respectively. A representation of $Q$ with dimension vector $\ub{d}=(d_i)_{i\in Q_0}$ is an assignment of a $d_i$-dimensional complex vector space $V_i$ to each vertex $i$, and a linear map
$$f_\alpha\colon V_{t(\alpha)}\to V_{h(\alpha)}$$
to each arrow $\alpha\in Q_1$. If $V$ is a representation of $Q$, we write $\dimv(V)=(\on{dim}{V_i})_{i\in Q_0}$. Choose a `stability condition'
$$\theta\in\Theta(Q,\ub{d}):=\{\vartheta\in\on{Hom}_\Z(\Z^{Q_0},\Q):\vartheta(\ub{d})=0\}$$
We say that a representation $V$ of $Q$ is $\theta$-stable if every nontrivial proper subrepresentation $U\subsetneq V$ has $\theta(\dimv(U))>0$. It is $\theta$-semistable if the strict inequality is weakened to $\geq$.
This data produces a moduli space $\M_\theta(Q,\ub{d})$ parameterising $\theta$-(semi)stable representations $V$ of $Q$ with $\dimv(V)=\ub{d}$. We will in fact usually work with representations in which the linear maps $f_\alpha$ satisfy certain relations coming from a superpotential \cite{bsw_sup_10}. The setup above is unmodified for this situation, and so we will not spell it out here.

It turns out that for generically chosen $\theta$ the moduli space does not depend on small perturbations of $\theta$ in $\Theta(Q,\ub{d})$. Thus there is a chamber decomposition
$$\Theta(Q,\ub{d})=\bigcup\ol{\mfk{C}}$$
where each open chamber $\mfk{C}$ has the property that $\theta,\vartheta\in\mfk{C}$ implies $\M_\theta(Q,\ub{d})\cong\M_\vartheta(Q,\ub{d})$. We denote this space by $\M_\mfk{C}$. Note that there may be many chambers describing isomorphic moduli spaces. In the particular situation of the McKay quiver $Q_G$ equipped with a natural dimension vector and certain `preprojective' relations, the moduli spaces $\M_\theta(Q_G,\ub{d})$ are crepant resolutions of $\C^3/G$.

\begin{thm}[\!\!\!{\cite[Thm.~1.2]{bkr_mck_01} +\cite[Thm.~1.1]{ci_flo_04}}] Let $G\subseteq\on{SL}_3(\C)$ be a finite subgroup. Set $\ub{d}=(\on{dim}{\rho})_{\rho\in\on{Irr}(G)}$. For each chamber $\mfk{C}\subseteq\Theta(Q_G,\ub{d})$ the moduli space $\M_\mfk{C}$ is a crepant resolution of $\C^3/G$. When $G$ is abelian, every projective crepant resolution of $\C^3/G$ arises in this way.
\end{thm}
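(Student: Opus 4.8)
The statement combines two theorems, and I would prove both by the same Fourier--Mukai strategy, following Bridgeland--King--Reid for the first assertion and adapting it for the second. The starting point is to make the universal object explicit. A representation of $Q_G$ with dimension vector $\ub{d}=(\on{dim}\rho)_\rho$ satisfying the preprojective relations is exactly a \emph{$G$-constellation}: a $G$-equivariant coherent sheaf $F$ on $\C^3$ of finite length with $H^0(F)\cong\C[G]$ as a $G$-module (for abelian $G$ one has $\ub{d}=(1,\dots,1)$ and $\C[G]$ is the regular representation). For generic $\theta$ the fine moduli space $\M_\theta:=\M_\theta(Q_G,\ub{d})$ therefore carries a tautological family $\mathcal{F}$ on $\M_\theta\times\C^3$, flat over $\M_\theta$, whose fibres are the $\theta$-stable $G$-constellations. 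I would use $\mathcal{F}$ as the kernel of a Fourier--Mukai transform
\[
\Phi:=\Phi_{\mathcal{F}}\colon D^b(\M_\theta)\longrightarrow D^b_G(\C^3),
\]
where $D^b_G(\C^3)$ is the bounded derived category of $G$-equivariant coherent sheaves.

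For the first assertion I would then invoke the intersection-theoretic criterion of \cite{bkr_mck_01}. Writing $X=\C^3/G$ and $\pi\colon\M_\theta\to X$ for the induced projective birational morphism (an isomorphism over the free locus of the $G$-action), $\Phi$ is an equivalence provided $\dim(\M_\theta\times_X\M_\theta)\le\dim\M_\theta+1=4$. This inequality is precisely where $n\le 3$ and $G\subseteq\on{SL}_3(\C)$ are essential: it follows from the fibre-dimension estimates of \cite{bkr_mck_01}, the binding case being the fibre over the origin, which may be a surface. Granting the equivalence, $\M_\theta$ is forced to be smooth, irreducible of dimension $3$, hence a resolution; and comparing Serre functors across $\Phi$ — using that $\omega_{\C^3}$ is $G$-equivariantly trivial because $G\subseteq\on{SL}_3(\C)$ — shows $\omega_{\M_\theta}$ is trivial relative to $X$, i.e.\ $\pi$ is crepant. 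Since $\M_\theta$ depends only on the chamber containing $\theta$, this gives that $\M_\mfk{C}$ is a crepant resolution of $\C^3/G$ for every chamber $\mfk{C}$.

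For the second (abelian) assertion I would argue in the reverse direction. Given a projective crepant resolution $Y\to X$, the derived McKay correspondence provides a Fourier--Mukai equivalence $D^b(Y)\simeq D^b_G(\C^3)$; transporting the skyscraper sheaves $\mathcal{O}_y$ through this equivalence yields a flat family of $G$-constellations parameterised by $Y$, and hence a classifying morphism from $Y$ to a moduli functor of constellations. The task is to produce a generic $\theta\in\Theta(Q_G,\ub{d})$ for which these constellations are $\theta$-stable and the classifying morphism is an isomorphism onto $\M_\theta$. One reads off the candidate $\theta$ from the tautological data on $Y$: the summands of the tilting bundle (the tautological bundles indexed by $\on{Irr}(G)$) determine a point of $\Theta(Q_G,\ub{d})$ through their degrees on the exceptional curves, and one verifies that this $\theta$ lies in an open chamber and recovers $Y$.

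The main obstacle is exactly this last matching step, which is a wall-crossing problem and the genuine content of \cite{ci_flo_04}. One must show that the GIT chamber decomposition of $\Theta(Q_G,\ub{d})$ is compatible with the birational geometry of $X$ over the movable cone: that crossing a wall between chambers effects a flop between the two crepant resolutions (or a more drastic contraction at walls of a different type), and that the chambers yielding honest crepant resolutions \emph{exhaust} all projective crepant resolutions rather than missing some model. In the toric setting this becomes combinatorial — resolutions correspond to unimodular triangulations of the junior simplex, flops to the edge-flips of Fig.~\ref{fig:flip}, and one must track how each flip is reflected as a wall-crossing in $\Theta$ — but controlling all the walls, and in particular excluding the possibility that $Y\to\M_\theta$ only identifies points without being an isomorphism, is the delicate heart of the argument.
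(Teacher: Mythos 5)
This theorem is quoted background in the paper: it is attributed to \cite[Thm.~1.2]{bkr_mck_01} and \cite[Thm.~1.1]{ci_flo_04} and no proof is given here, so the comparison is really against the cited literature. Your sketch of the first assertion is a faithful reconstruction of the actual argument: Craw--Ishii observe that the BKR machinery applies verbatim to the fine moduli space of $\theta$-stable $G$-constellations for generic $\theta$, with the universal family as Fourier--Mukai kernel, the equivalence criterion $\dim(\M_\theta\times_{\C^3/G}\M_\theta)\le 4$ (your identification of the fibre over the origin, $2+2+0=4$, as the binding case is exactly right), and crepancy extracted from the compatibility with Serre functors using the $G$-equivariant triviality of $\omega_{\C^3}$. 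One small gloss: ``$\M_\theta$ is forced to be smooth, irreducible'' hides the standard subtlety that one first runs the BKR argument on the distinguished component birational to $\C^3/G$ and only afterwards uses the equivalence to conclude that $\M_\theta$ coincides with it; this is handled explicitly in the sources.

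For the second assertion your route differs in direction from Craw--Ishii's, and the difference matters. You propose to start from a given projective crepant resolution $Y$, transport skyscrapers through a derived equivalence to get a family of $G$-constellations, and read off a candidate $\theta$ from degrees of tautological bundles on exceptional curves; verifying that this $\theta$ is generic and recovers $Y$ is, as you admit, the whole problem, and your sketch gives no mechanism for it. Craw--Ishii instead argue by induction along the flop graph: any two projective crepant resolutions of the toric $3$-fold singularity are connected by a finite sequence of flops, so it suffices to show that every flop of $\M_{\mfk{C}}$ is realised by a wall-crossing, possibly after first crossing finitely many walls of type \texttt{0} (their \S8, using the classification of walls in their Prop.~6.1 and the computation of how tautological bundles change across walls via the equivalences $\Phi_{\mfk{C}}$). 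This inductive structure is precisely what lets them control the ``exhaustion'' issue you flag, and it is also why the present paper's wall-crossing formalism in \S\ref{sec:wall} tracks paths from $\mfk{C}_0$ crossing auxiliary type \texttt{0}/\texttt{0'} walls. So: your first half is essentially the published proof; your second half correctly names the difficulty but would need to be replaced by (or developed into) the flop-connectedness induction to close the gap between ``a candidate $\theta$'' and ``a chamber whose moduli space is $Y$''.
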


Since the vertex set of $Q_G$ is identified with the set $\on{Irr}(G)$ of irreducible representations of $G$, we can regard stability conditions $\theta$ as maps from the representation ring of $G$ to $\Q$. In this version the condition $\theta(\ub{d})=0$ translates to $\theta$ evaluating to zero on the regular representation. Stability conditions $\theta$ that evaluate positively on nontrivial representations give rise to moduli spaces isomorphic to $G\hilb\C^3$. We denote the set of such stability conditions by $\Theta^+(Q_G,\ub{d})$ and the chamber containing them by $\mfk{C}_0$.

\section{Wall-crossing} \label{sec:wall}

As one passes from a chamber $\mfk{C}\subseteq\Theta(Q_G,\ub{d})$ to another chamber $\mfk{C}'$ through the `wall' $\ol{\mfk{C}}\cap\ol{\mfk{C}'}$ there is a birational map $\M_\mfk{C}\to\M_{\mfk{C}'}$ obtained by factoring through the moduli space of semistable representations corresponding to a generic stability condition on the wall.

Let $\pi_\mfk{C}$ denote the morphism $\M_\mfk{C}\to\C^3/G$. Often some exceptional locus $E\subseteq\pi_\mfk{C}^{-1}(0)$ is contracted in passing to the moduli of semistable representations. This phenomenon has been studied in \cite{ci_flo_04} and in depth for the chamber $\mfk{C}_0$ giving $G\hilb\C^3$ in \cite{wor_wal_20} when $G$ is abelian. One of the key tools in the latter is \emph{Reid's recipe} \cite{rei_mck_97,cra_exp_05}, which labels strata of $\pi_{\mfk{C}_0}^{-1}(0)$ with irreducible representations of $G$.

\begin{example} Consider $G=\frac{1}{6}(1,2,3)$. Reid's recipe for this group is shown in Fig.~\ref{fig:1/6a}. An integer $a$ denotes the representation $\rho_a\colon G\to\C^\times,g\mapsto\eps^a$.

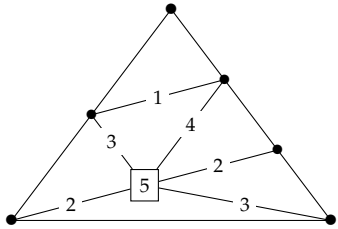
\begin{figure}[h]
\begin{center}
\begin{tikzpicture}[scale=0.7]

\node(e1) at (0+8,2){$\bullet$};
\node(e2) at (3+8,-2){$\bullet$};
\node(e3) at (-3+8,-2){$\bullet$};

\small

\node[draw,fill=white] (123) at (-0.5+8,-4/3){\tiny $5$};
\node(240) at (2+8,2-8/3){$\bullet$};
\node(420) at (1+8,2-4/3){$\bullet$};
\node(303) at (-1.5+8,0){$\bullet$};

{\tiny

\draw[-] (e2.center) to node[fill=white] {$3$} (123);
\draw[-] (e3.center) to node[fill=white] {$2$} (123) to node[fill=white] {$2$} (240.center);
\draw[-] (303.center) to node[fill=white] {$1$} (420.center);
\draw[-] (123) to node[fill=white] {$3$} (303.center);
\draw[-] (123) to node[fill=white] {$4$} (420.center);
}

\draw[-] (e1.center) to (e2.center) to (e3.center) to (e1.center);
\end{tikzpicture}
\end{center}
\caption{Reid's recipe for $G=\frac{1}{6}(1,2,3)$}
\label{fig:1/6a}
\end{figure}
\end{example}

The main result of \cite{wor_wal_20} classifies the walls of $\mfk{C}_0$ when $G$ is abelian and finds the unstable and contracted loci for each wall. We need some more language in order to state it. 

Let $\rho$ be a character marking a curve in $G$-Hilb. We say that the collection or `chain' of curves marked with $\rho$ is a \emph{generalised long side} \cite[Def.~4.12]{wor_wal_20} if it starts and ends on the boundary of the junior simplex, and if all the corresponding edges along the $\chi$-chain are boundary edges of regular triangles. We exclude the fundamentally different case of three lines meeting at a trivalent vertex if there is a meeting of champions \cite[\S2.8.2]{cr_how_02} of side length $0$. We call a curve in a generalised long side \emph{final} \cite[Def.~4.14]{wor_wal_20} if it is the furthest curve along away from a vertex along a straight line segment.

On the left of Figure \ref{fig:final} we show the two generalised long sides for $G=\frac{1}{35}(1,3,31)$ with dashed lines, and on the right we show the corresponding final curves bolded.

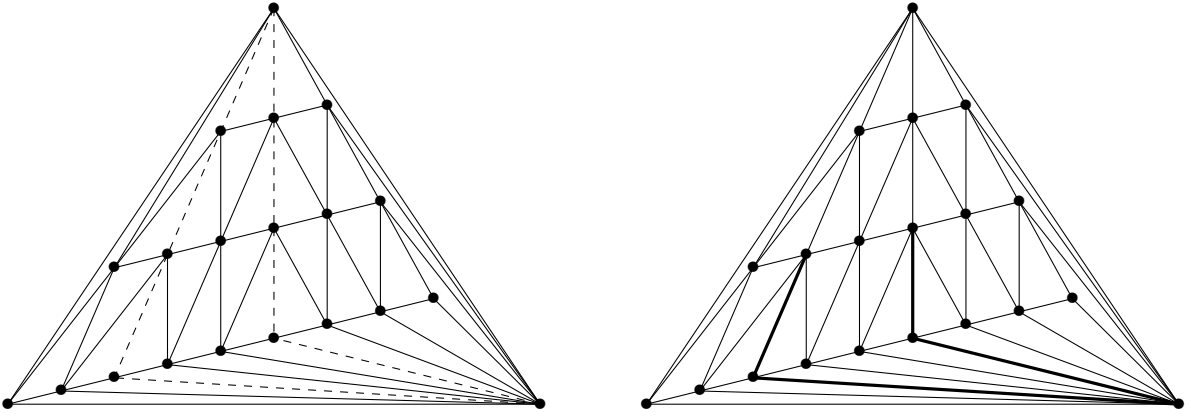
\begin{figure}[h]
\begin{center}
\begin{tikzpicture}[scale=0.35]
\node (e1) at (0,9){$\bullet$};
\node (e2) at (10,-6){$\bullet$};
\node (e3) at (-10,-6){$\bullet$};

\draw (e1.center) to (e2.center) to (e3.center) to (e1.center);

\node (a1) at (-10+2,-6+1/2){$\bullet$};
\node (a2) at (-10+4,-6+2/2){$\bullet$};
\node (a3) at (-10+6,-6+3/2){$\bullet$};
\node (a4) at (-10+8,-6+4/2){$\bullet$};
\node (a5) at (-10+10,-6+5/2){$\bullet$};
\node (a6) at (-10+12,-6+6/2){$\bullet$};
\node (a7) at (-10+14,-6+7/2){$\bullet$};
\node (a8) at (6,-2){$\bullet$};

\node (b1) at (2,9-11/3){$\bullet$};
\node (b2) at (4,9-22/3){$\bullet$};

\node (c1) at (-10+6-2,-6+3/2+11/3){$\bullet$};
\node (c2) at (-10+8-2,-6+4/2+11/3){$\bullet$};
\node (c3) at (-10+10-2,-6+5/2+11/3){$\bullet$};
\node (c4) at (-10+12-2,-6+6/2+11/3){$\bullet$};
\node (c5) at (-10+14-2,-6+7/2+11/3){$\bullet$};

\node (d1) at (2-4,9-2/2-11/3){$\bullet$};
\node (d2) at (2-2,9-1/2-11/3){$\bullet$};

\draw(e2.center) to (a1.center);
\draw[dashed](e2.center) to (a2.center);
\draw(e2.center) to (a3.center);
\draw(e2.center) to (a4.center);
\draw[dashed](e2.center) to (a5.center);
\draw(e2.center) to (a6.center);
\draw(e2.center) to (a7.center);
\draw(e2.center) to (a8.center);

\draw(e2.center) to (b1.center);
\draw(e2.center) to (b2.center);

\draw(e3.center) to (c1.center) to (d1.center);

\draw(e1.center) to (c1.center);
\draw[dashed](e1.center) to (d1.center) to (c2.center);
\draw[dashed] (c2.center) to (a2.center);
\draw[dashed](e1.center) to (d2.center) to (c4.center);
\draw[dashed] (c4.center) to (a5.center);

\draw (e3.center) to (a1.center) to (a2.center) to (a3.center) to (a4.center) to (a5.center) to (a6.center) to (a7.center) to (a8.center);

\draw (e1.center) to (b1.center) to (b2.center) to (a8.center);

\draw (a1.center) to (c1.center);
\draw (a1.center) to (c2.center);
\draw (c1.center) to (c2.center);

\draw (d1.center) to (d2.center);
\draw (a3.center) to (c2.center);
\draw (a4.center) to (c4.center);
\draw (d1.center) to (c3.center) to (a4.center);
\draw (c2.center) to (c3.center) to (c4.center);
\draw (a3.center) to (c3.center) to (d2.center);

\draw (d2.center) to (b1.center);
\draw (a7.center) to (b2.center);
\draw (a6.center) to (c4.center);
\draw (a6.center) to (c5.center) to (b1.center);
\draw (c4.center) to (c5.center) to (b2.center);
\draw (a7.center) to (c5.center) to (d2.center);

\node (e1) at (0+24,9){$\bullet$};
\node (e2) at (10+24,-6){$\bullet$};
\node (e3) at (-10+24,-6){$\bullet$};

\draw (e1.center) to (e2.center) to (e3.center) to (e1.center);

\node (a1) at (-10+2+24,-6+1/2){$\bullet$};
\node (a2) at (-10+4+24,-6+2/2){$\bullet$};
\node (a3) at (-10+6+24,-6+3/2){$\bullet$};
\node (a4) at (-10+8+24,-6+4/2){$\bullet$};
\node (a5) at (-10+10+24,-6+5/2){$\bullet$};
\node (a6) at (-10+12+24,-6+6/2){$\bullet$};
\node (a7) at (-10+14+24,-6+7/2){$\bullet$};
\node (a8) at (6+24,-2){$\bullet$};

\node (b1) at (2+24,9-11/3){$\bullet$};
\node (b2) at (4+24,9-22/3){$\bullet$};

\node (c1) at (-10+6-2+24,-6+3/2+11/3){$\bullet$};
\node (c2) at (-10+8-2+24,-6+4/2+11/3){$\bullet$};
\node (c3) at (-10+10-2+24,-6+5/2+11/3){$\bullet$};
\node (c4) at (-10+12-2+24,-6+6/2+11/3){$\bullet$};
\node (c5) at (-10+14-2+24,-6+7/2+11/3){$\bullet$};

\node (d1) at (2-4+24,9-2/2-11/3){$\bullet$};
\node (d2) at (2-2+24,9-1/2-11/3){$\bullet$};

\draw(e2.center) to (a1.center);
\draw[line width = 1.2pt](e2.center) to (a2.center);
\draw(e2.center) to (a3.center);
\draw(e2.center) to (a4.center);
\draw[line width = 1.2pt](e2.center) to (a5.center);
\draw(e2.center) to (a6.center);
\draw(e2.center) to (a7.center);
\draw(e2.center) to (a8.center);

\draw(e2.center) to (b1.center);
\draw(e2.center) to (b2.center);

\draw(e3.center) to (c1.center) to (d1.center);

\draw(e1.center) to (c1.center);
\draw(e1.center) to (d1.center) to (c2.center);
\draw[line width = 1.2pt] (c2.center) to (a2.center);
\draw(e1.center) to (d2.center) to (c4.center);
\draw[line width = 1.2pt] (c4.center) to (a5.center);

\draw (e3.center) to (a1.center) to (a2.center) to (a3.center) to (a4.center) to (a5.center) to (a6.center) to (a7.center) to (a8.center);

\draw (e1.center) to (b1.center) to (b2.center) to (a8.center);

\draw (a1.center) to (c1.center);
\draw (a1.center) to (c2.center);
\draw (c1.center) to (c2.center);

\draw (d1.center) to (d2.center);
\draw (a3.center) to (c2.center);
\draw (a4.center) to (c4.center);
\draw (d1.center) to (c3.center) to (a4.center);
\draw (c2.center) to (c3.center) to (c4.center);
\draw (a3.center) to (c3.center) to (d2.center);

\draw (d2.center) to (b1.center);
\draw (a7.center) to (b2.center);
\draw (a6.center) to (c4.center);
\draw (a6.center) to (c5.center) to (b1.center);
\draw (c4.center) to (c5.center) to (b2.center);
\draw (a7.center) to (c5.center) to (d2.center);
\end{tikzpicture}
\end{center}
\caption{Generalised long sides and final curves for $G=\frac{1}{35}(1,3,31)$}
\label{fig:final}
\end{figure}

\begin{thm}[\!{\cite[Thm.~4.17]{wor_wal_20}}] \label{thm:worm} Let $G\subseteq\on{SL}_3(\C)$ be abelian. The walls of $\mfk{C}_0$ are as follows:
\begin{enumerate}
\item[\texttt{0}.] one wall for each irreducible exceptional divisor,
\item[\texttt{I}.] one wall for each exceptional $(-1,-1)$-curve,
\item[\texttt{III}.] one wall for each generalised long side,
\item[\texttt{0'}.] the remaining walls come from potentially reducible exceptional divisors.
\end{enumerate}
The unstable locus in each case is:
\begin{enumerate}
\item[\texttt{0}.] the irreducible divisor (not contracted),
\item[\texttt{I}.] the $(-1,-1)$-curve (contracted to a point),
\item[\texttt{III}.] the ruled surface swept out by a final curve of the generalised long side (contracted to a curve),
\item[\texttt{0'}.] the potentially reducible divisor (not contracted).
\end{enumerate}
\end{thm}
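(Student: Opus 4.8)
The plan is to leverage that $\M_{\mfk{C}_0} = G\hilb\C^3$ is a \emph{fine} moduli space of $\theta$-stable representations of $Q_G$, so that it carries a universal representation $\mathcal{R} = \bigoplus_{\rho}\mathcal{R}_\rho$. Crossing a wall $W = \ol{\mfk{C}_0}\cap\ol{\mfk{C}'}$ is then controlled by which subrepresentations of the tautological family acquire $\theta$-value $0$ on $W$: a wall is the hyperplane $\{\theta : \theta(\dimv(U))=0\}$ cut out by the dimension vector of some subrepresentation $U$, and the unstable locus is precisely the locus of $G$-clusters $Z\in G\hilb\C^3$ admitting such a $U$ as a subrepresentation. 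I would first make this dictionary precise, reducing the theorem to (a) enumerating the dimension vectors $\dimv(U)$ that bound the chamber $\mfk{C}_0$, and (b) describing, for each, the locus where the corresponding $U$ exists.

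To carry out the enumeration I would stratify the exceptional fibre $\pi_{\mfk{C}_0}^{-1}(0)$ by its torus-invariant pieces --- compact divisors, curves, and fixed points --- using the dictionary with interior vertices, edges, and triangles of the triangulation $\mathcal{T}$ recalled above. The key input is Reid's recipe, which labels each such stratum by an irreducible representation $\rho$; these labels govern the degrees of the tautological line bundles $\mathcal{R}_\rho$ along exceptional curves and thereby determine exactly which subrepresentation destabilises along which stratum. At each torus-fixed point the universal module can be written down explicitly from the local monomial combinatorics of the corresponding triangle, and one reads off its subrepresentations of each admissible dimension vector together with the sign of $\theta$ on $\mfk{C}_0$. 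This local-to-global passage converts the GIT problem into combinatorial bookkeeping over $\mathcal{T}$.

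The classification into the four types then falls out by organising the walls according to the geometry of the stratum carrying the destabilising $\rho$. When $\rho$ marks a single irreducible exceptional divisor (type \texttt{0}) or a reducible one (type \texttt{0'}), the destabilising subrepresentation exists exactly along that divisor, and a local toric computation at the fixed points lying on it shows that the induced birational map to the wall model is an isomorphism, so that the divisor is unstable but uncontracted. When $\rho$ marks an exceptional $(-1,-1)$-curve (type \texttt{I}), the subrepresentation exists along that curve, which one checks is contracted to a point, recovering the Atiyah flop. When $\rho$ marks a generalised long side (type \texttt{III}), one traces the $\rho$-chain across $\Delta_1$ and shows that the destabiliser is supported on the ruled surface swept out by the \emph{final} curve, which collapses onto its base, a genuine divisorial contraction to a curve. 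In each case the normal-bundle data certifying the contraction type is extracted from the local toric charts at the fixed points of the stratum.

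The main obstacle is twofold. First is \emph{completeness}: one must show that every facet of $\mfk{C}_0$ occurs on this list and that no dimension vector outside those produced by Reid's recipe can bound $\mfk{C}_0$. This requires using the positivity defining $\Theta^+(Q_G,\ub{d})$ to exclude spurious destabilisers, together with a count of the facets of the chamber. Second, and more delicate, is the type \texttt{III} analysis: correctly singling out the final curve of a generalised long side, identifying the ruled surface it sweeps out, and handling the excluded trivalent \emph{meeting of champions} of side length $0$ all demand a genuinely global argument over $\mathcal{T}$, since the relevant subrepresentation is supported along a chain of curves rather than a single torus-invariant stratum. I expect this to be where most of the effort concentrates.
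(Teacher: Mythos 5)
First, a point of order: the paper does not prove Theorem~\ref{thm:worm} at all --- it is imported verbatim from \cite[Thm.~4.17]{wor_wal_20} --- so the comparison is necessarily with the proof in that source. At the level of architecture your sketch is a faithful reconstruction of it: the source likewise runs on the fine-moduli/tautological-bundle dictionary (walls of $\mfk{C}_0$ supported on hyperplanes $\theta(\dimv(U))=0$ for destabilising subrepresentations of the universal family), uses Reid's recipe to control the degrees of the tautological line bundles $\mathcal{R}_\rho$ along torus-invariant exceptional curves, and organises the case analysis by the contraction type of the wall following the Craw--Ishii/Wilson taxonomy \cite{ci_flo_04,wil_kah_92}. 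You also correctly locate the hardest step in the type \texttt{III} analysis of generalised long sides and final curves.

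That said, there are concrete gaps where the load-bearing steps are deferred rather than supplied. (1) Your classification ``falls out by organising the walls according to the geometry of the stratum carrying the destabilising $\rho$'' silently assumes the listed unstable loci exhaust the possibilities: the theorem's list \texttt{0}, \texttt{I}, \texttt{III}, \texttt{0'} notably \emph{excludes} type \texttt{II} walls (a compact divisor contracted to a point), and ruling these out for $\mfk{C}_0$ is part of the assertion, not a consequence of your bookkeeping; it requires a separate argument on top of the Craw--Ishii classification. (2) The dichotomy \texttt{0} versus \texttt{0'} cannot emerge from your wall$\leftrightarrow$labelled-stratum dictionary, because the \texttt{0'} walls are precisely those where that dictionary breaks down (the unstable divisor is potentially reducible and is not governed by a single Reid's-recipe label --- indeed the paper assigns $\chi(\mfk{w})=\emptyset$ to them); your completeness step ``no dimension vector outside those produced by Reid's recipe can bound $\mfk{C}_0$'' is therefore not even correctly stated as a target. (3) The ``key input'' you invoke --- that the recipe labels determine exactly which subrepresentation destabilises along which stratum, via degrees of the $\mathcal{R}_\rho$ --- is itself the main technical content of the cited proof (resting on explicit computations of tautological bundle degrees), and your sketch offers no mechanism for it beyond naming it; the same goes for the type \texttt{III} support statement and the excluded meeting-of-champions configuration, which you flag as difficult but leave untouched. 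So the proposal is a sound skeleton matching the source's strategy, but the three items above are genuine missing ideas, not routine verifications.
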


We use the enumeration \texttt{0}-\texttt{III} of \cite{wil_kah_92,ci_flo_04} with the slight modification of distinguishing between the two possibilities for type \texttt{0} walls. In this way we can associate at most two representations to each wall of $\mfk{C}_0$.

\begin{definition} Let $G\subseteq\on{SL}_3(\C)$ be abelian. To a wall $\mfk{w}$ of $\mfk{C}_0$ we associate a (possibly empty) set $\chi(\mfk{w})$ of representations of $G$ as follows:
\begin{enumerate}
\item[\texttt{0}.] If $\mfk{w}$ corresponds to an irreducible divisor $D$, let $\chi(\mfk{w})$ be the set of (at most two) representations labelling $D$,
\item[\texttt{I}.] If $\mfk{w}$ corresponds to a $(-1,-1)$-curve $C$, let $\chi(\mfk{w})$ be the singleton consisting of the representation labelling $C$,
\item[\texttt{III}.] If $\mfk{w}$ corresponds to a generalised long side with final curves $C_1$ and $C_2$, let $\chi(\mfk{w})$ be the singleton consisting of the representation labelling $C_1$ and $C_2$,
\item[\texttt{0'}.] Set $\chi(\mfk{w})=\emptyset$ for all other walls.
\end{enumerate}
\end{definition}

Let $\gamma\colon[0,1]\to\Theta(Q_G,\ub{d})$ be a path starting in $\mfk{C}_0$. Suppose $\gamma$ only meets walls generically. Let $\gamma$ pass through walls $\mfk{w}_1,\dots,\mfk{w}_m$ at times $0=t_0<t_1<t_2<\dots<t_m<t_{m+1}=1$. Write $\mfk{C}_i$ for the chamber $\gamma(t)$ lives in for $t_i<t<t_{i+1}$. Notice that this is consistent with our labelling of $\mfk{C}_0$. Write $\M_i=\M_{\mfk{C}_i}$. We suppose further that the $\mfk{C}_i$ are distinct. By \cite[\S8]{ci_flo_04} we may restrict attention to the case that each wall $\mfk{w}_i$ is not of type \texttt{III} from Thm.~\ref{thm:worm}. From \cite[Prop.~6.1]{ci_flo_04} we see that the unstable locus for each such $\mfk{w}_i$ is either a single exceptional $(-1,-1)$-curve or a divisor as in Thm.~\ref{thm:worm}.

We can associate representations to the walls $\mfk{w}_i$ inductively by essentially propagating Reid's recipe to different crepant resolutions in such a way that is `oriented' by $\gamma$. Suppose $\mfk{w}_2$ has unstable locus $E\subseteq\M_1$. This is either:
\begin{enumerate}
\item[\texttt{0}.] an irreducible exceptional divisor $D$,
\item[\texttt{I}.] an exceptional curve $C$,
\item[\texttt{0'}.] a reducible exceptional divisor.
\end{enumerate}

Note that in type \texttt{0} the divisor $D$ corresponds to a vertex in the interior of the junior simplex. We mark $\mfk{w}_i$ with the set of representations marking the divisor in $G\hilb\C^3$ corresponding to that vertex. In type \texttt{I} there are two possibilities: either $\mfk{w}_1$ was type \texttt{I} and $C$ is the curve resulting from the flop, or $C$ is a different curve. If $C$ came from the flop, we let $\chi(\mfk{w}_2)=\chi(\mfk{w}_1)$; if $C$ is a different curve we let $\chi(\mfk{w}_2)$ be the singleton consisting of the representation marking the transform of that $C$ in $G\hilb\C^3$. We continue inductively, thus associating a set of representations $\chi(\mfk{w}_i)$ to each $\mfk{w}_i$. We write
$$\chi(\gamma):=\bigcup_{i=1}^m\chi(\mfk{w}_i)$$

\begin{example} \label{ex:16_wall} We describe this procedure for a path in $\Theta(Q_G,\ub{d})$ for $G=\frac{1}{6}(1,2,3)$. On the left is the triangulation for $G\hilb\C^3$ with the labels from Reid's recipe for two exceptional curves shown. There is a path $\gamma$ starting in $\mfk{C}_0$ and inducing in turn the two flops shown in the centre and rightmost triangulations: first flopping the curve marked with $4$, and then flopping the curve marked with $2$ that is not initially floppable but becomes so after the first flop. In this situation, the path $\gamma$ need only pass through two walls $\mfk{w}_1$ and $\mfk{w}_2$ corresponding to these flops. As discussed -- and as seen explicitly for a group of order $12$ in \cite[Ex.~9.13]{ci_flo_04} -- in general $\gamma$ may need to cross several walls of type \texttt{0} or \texttt{0'} prior to realising flops not directly from $G\hilb\C^3$. The first wall has $\chi(\mfk{w}_1)=\{4\}$, and the second has $\chi(\mfk{w}_2)=\{2\}$ since the curve flopped by crossing $\mfk{w}_2$ was marked with $2$ in $G\hilb\C^3$.

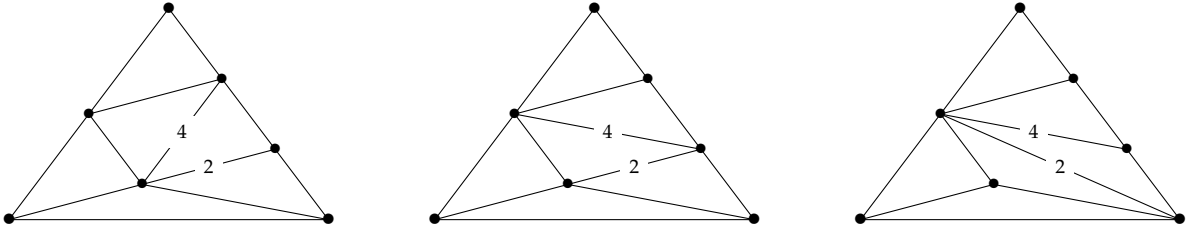
\begin{figure}[h]
\begin{center}
\begin{tikzpicture}[scale=0.7]
\node(e1) at (0+8,2){$\bullet$};
\node(e2) at (3+8,-2){$\bullet$};
\node(e3) at (-3+8,-2){$\bullet$};

{\small

\node(123) at (-0.5+8,-4/3){$\bullet$};
\node(240) at (2+8,2-8/3){$\bullet$};
\node(420) at (1+8,2-4/3){$\bullet$};
\node(303) at (-1.5+8,0){$\bullet$};

\draw[-] (e2.center) to (123.center);
\draw[-] (e3.center) to (123.center) to node[fill=white] {\tiny $2$} (240.center);
\draw[-] (303.center) to (420.center);
\draw[-] (123.center) to (303.center);
\draw[-] (123.center) to node[fill=white] {\tiny $4$} (420.center);

\draw[-] (e1.center) to (e2.center) to (e3.center) to (e1.center);
}
\node(e1) at (0+16,2){$\bullet$};
\node(e2) at (3+16,-2){$\bullet$};
\node(e3) at (-3+16,-2){$\bullet$};

{\small

\node(123) at (-0.5+16,-4/3){$\bullet$};
\node(240) at (2+16,2-8/3){$\bullet$};
\node(420) at (1+16,2-4/3){$\bullet$};
\node(303) at (-1.5+16,0){$\bullet$};

\draw[-] (e2.center) to (123.center);
\draw[-] (e3.center) to (123.center) to node[fill=white] {\tiny $2$} (240.center);
\draw[-] (303.center) to (420.center);
\draw[-] (123.center) to (303.center);
\draw[-] (303.center) to node[fill=white] {\tiny $4$} (240.center);

\draw[-] (e1.center) to (e2.center) to (e3.center) to (e1.center);
}

\node(e1) at (0+24,2){$\bullet$};
\node(e2) at (3+24,-2){$\bullet$};
\node(e3) at (-3+24,-2){$\bullet$};

\small

\node(123) at (-0.5+24,-4/3){$\bullet$};
\node(240) at (2+24,2-8/3){$\bullet$};
\node(420) at (1+24,2-4/3){$\bullet$};
\node(303) at (-1.5+24,0){$\bullet$};

\draw[-] (e2.center) to (123.center);
\draw[-] (e3.center) to (123.center);
\draw[-] (303.center) to node[fill=white] {\tiny $2$} (e2.center);
\draw[-] (303.center) to (420.center);
\draw[-] (123.center) to (303.center);
\draw[-] (303.center) to node[fill=white] {\tiny $4$} (240.center);

\draw[-] (e1.center) to (e2.center) to (e3.center) to (e1.center);
\end{tikzpicture}
\end{center}
\caption{Wall-crossings for $G=\frac{1}{6}(1,2,3)$}
\label{fig:1/6b}
\end{figure}

\end{example}

\section{Iterated Hilbert schemes} \label{sec:it_hilb}

Suppose $G\subseteq\on{SL}_3(\C)$ is a finite subgroup with a normal subgroup $A\lhd G$ and quotient $T=G/A$. $T$ acts on $A\hilb\C^3$ producing a crepant resolution
$$T\hilb A\hilb\C^3\to\C^3/G$$
One can easily extend this construction to longer chains of normal subgroups $A_1\lhd\dots\lhd A_s\lhd G$. We call the crepant resolutions constructed by this procedure \emph{iterated Hilbert schemes} (referred to as `Hilb of Hilb' in \cite{iin_gnh_13}). An explicit stability condition $\vartheta$ was constructed in \emph{ibid.}~to express $T\hilb A\hilb\C^3$ as a moduli space of quiver representations.

Suppose $\theta_A$ and $\theta_T$ are stability conditions for $Q_A$ and $Q_T$ respectively. As above it will be convenient to view $\theta_A$ (resp.~$\theta_T$) as a map from $\on{Irr}(A)$ (resp.~$\on{Irr}(T)$) to $\Q$, or more generally from the representation ring of $A$ (resp.~$T$) to $\Q$. Suppose further that $\theta_A$ and $\theta_T$ are positive on all nontrivial irreducible representations of $A$ and $T$ respectively -- we call such stability conditions \emph{zero-generated}. A stability condition $\vartheta\in\Theta(Q_G,\ub{d})$ producing $T\hilb A\hilb\C^3$ is defined by
$$\vartheta(\rho)=\begin{cases}
\theta_A(\rho|_A) & \rho\notin\on{Irr}(T) \\
\theta_A(\rho|_A)+\eps\cdot\theta_T(\rho|_T) & \rho\in\on{Irr}(T)
\end{cases}$$
for sufficiently small $\eps>0$. By $\rho\in\on{Irr}(T)$ we mean that $\rho$ is an irreducible representation of $G$ that is lifted from $T$ via the map $G\to T$; that is, $\rho|_A$ is trivial.

In the case that $G$ is abelian, Ishii--Ito--Nolla \cite[\S4.1]{iin_gnh_13} constructed a triangulation of the junior simplex giving $T\hilb A\hilb\C^3$ as a toric variety.

\begin{example} \label{ex:16_it} Consider $G=\frac{1}{6}(1,2,3)$. We consider the normal subgroup $A\cong\Z/2$ inside $G$ with quotient $T\cong\Z/3$. We show the triangulation for $T\hilb A\hilb\C^3$ on the left of Fig.~\ref{fig:1/6c}. Note that this is the second flop of $G\hilb\C^3$ considered in Ex.~\ref{ex:16_wall}. One can similarly start with the normal subgroup $A'\cong\Z/3$ with quotient $T'\cong\Z/2$. The iterated Hilbert scheme $T'\hilb A'\hilb\C^3$ is shown on the right of Fig.~\ref{fig:1/6c}. This is obtained by flopping the curve marked by $3$ in $G\hilb\C^3$.

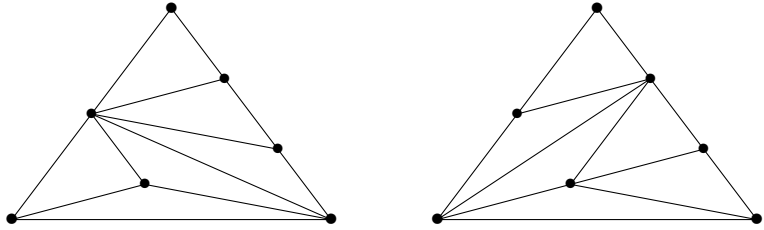
\begin{figure}[h]
\begin{center}
\begin{tikzpicture}[scale=0.7]
\node(e1) at (0+24,2){$\bullet$};
\node(e2) at (3+24,-2){$\bullet$};
\node(e3) at (-3+24,-2){$\bullet$};

{\small

\node(123) at (-0.5+24,-4/3){$\bullet$};
\node(240) at (2+24,2-8/3){$\bullet$};
\node(420) at (1+24,2-4/3){$\bullet$};
\node(303) at (-1.5+24,0){$\bullet$};

\draw[-] (e2.center) to (123.center);
\draw[-] (e3.center) to (123.center);
\draw[-] (303.center) to (e2.center);
\draw[-] (303.center) to (420.center);
\draw[-] (123.center) to (303.center);
\draw[-] (303.center) to (240.center);
}
\draw[-] (e1.center) to (e2.center) to (e3.center) to (e1.center);

\node(e1) at (0+32,2){$\bullet$};
\node(e2) at (3+32,-2){$\bullet$};
\node(e3) at (-3+32,-2){$\bullet$};

{\small

\node(123) at (-0.5+32,-4/3){$\bullet$};
\node(240) at (2+32,2-8/3){$\bullet$};
\node(420) at (1+32,2-4/3){$\bullet$};
\node(303) at (-1.5+32,0){$\bullet$};

\draw[-] (e2.center) to (123.center);
\draw[-] (e3.center) to (123.center);
\draw[-] (123.center) to (420.center);
\draw[-] (e3.center) to (420.center);
\draw[-] (123.center) to (240.center);
\draw[-] (303.center) to (420.center);
}
\draw[-] (e1.center) to (e2.center) to (e3.center) to (e1.center);
\end{tikzpicture}
\end{center}
\caption{Iterated Hilbert schemes for $G=\frac{1}{6}(1,2,3)$}
\label{fig:1/6c}
\end{figure}
\end{example}

\section{Connecting $G$-Hilb and iterated Hilbert schemes}

\subsection{Main conjecture} \label{sec:conj}

We start with the following lemma.

\begin{lemma} \label{lem:rep} With $G,A,T,\vartheta$ as above we have $\vartheta(\rho)<0$ if and only if $\rho\in\on{Irr}(T)$.
\end{lemma}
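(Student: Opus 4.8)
The plan is to treat the two clauses in the definition of $\vartheta$ separately, the whole argument turning on the (perhaps counterintuitive) fact that a zero-generated stability condition is forced to be \emph{negative} on the trivial representation. Indeed, since $\theta_A$ must vanish on the class of the regular representation $\C[A]=\bigoplus_{\sigma\in\on{Irr}(A)}(\on{dim}\sigma)\,\sigma$, we get
$$\theta_A(\mathbf{1}_A)=-\sum_{\sigma\neq\mathbf{1}_A}(\on{dim}\sigma)\,\theta_A(\sigma)<0,$$
the inequality being strict because $A\neq 1$ and $\theta_A(\sigma)>0$ for every nontrivial $\sigma$ by zero-generatedness. The identical computation gives $\theta_T(\mathbf{1}_T)<0$.

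The second ingredient is the structure of the restriction $\rho|_A$. By definition $\rho\in\on{Irr}(T)$ precisely when $\rho|_A$ is trivial, and I would first record the sharper statement that $\rho|_A$ contains $\mathbf{1}_A$ as a summand only if it is \emph{entirely} trivial. For abelian $G$ this is immediate, as $\rho|_A$ is then a single character; in general it follows from Clifford theory, since $\rho|_A$ is a positive multiple of a single $G$-orbit in $\on{Irr}(A)$ and the trivial character is $G$-fixed, so its appearance forces $\rho|_A=(\on{dim}\rho)\,\mathbf{1}_A$. With this the lemma splits cleanly. If $\rho\notin\on{Irr}(T)$, then $\rho|_A$ is a nonzero combination of \emph{nontrivial} irreducibles of $A$, whence $\vartheta(\rho)=\theta_A(\rho|_A)>0$ and in particular $\vartheta(\rho)\not<0$. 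If instead $\rho\in\on{Irr}(T)$, then $\rho|_A=(\on{dim}\rho)\,\mathbf{1}_A$ and
$$\vartheta(\rho)=(\on{dim}\rho)\,\theta_A(\mathbf{1}_A)+\eps\cdot\theta_T(\rho|_T),$$
whose leading term is a fixed negative number while the correction is $\eps$ times a bounded quantity.

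The remaining step is to confirm that the negative leading term dominates. Since $\on{Irr}(T)$ is finite, choosing $\eps>0$ small enough that $\eps\,|\theta_T(\rho|_T)|<(\on{dim}\rho)\,|\theta_A(\mathbf{1}_A)|$ for every $\rho\in\on{Irr}(T)$ -- which is exactly the regime in which $\vartheta$ was defined -- yields $\vartheta(\rho)<0$, settling both directions. The only genuinely delicate point is the sign $\theta_A(\mathbf{1}_A)<0$: it is this consequence of balancing against the regular representation, rather than anything about $\eps$, that singles out the lifted representations as precisely those on which $\vartheta$ is negative. The uniform choice of $\eps$ over the finite set $\on{Irr}(T)$ is the only thing left to verify, and it is routine.
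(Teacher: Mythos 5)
Your proof is correct, and its skeleton is the same as the paper's: both arguments rest on (i) any zero-generated $\theta_A$ being negative on the trivial representation $\rho_0$ and positive on nontrivial ones, (ii) the dichotomy that for irreducible $\rho$ the restriction $\rho|_A$ is either entirely trivial or contains no trivial summand at all, and (iii) taking $\eps$ small enough that the $\theta_T$-term cannot flip the sign. The differences are in how you reach (i) and (ii). The paper asserts that the choice of zero-generated $\theta_A$ is irrelevant and then normalises $\theta_A(\sigma)=\on{dim}\sigma$ on nontrivial $\sigma$, deducing $\theta_A(\rho_0)=1-|A|$; you keep $\theta_A$ arbitrary and derive $\theta_A(\rho_0)<0$ from vanishing on $\C[A]$, so your argument actually \emph{proves} the choice-independence the paper only asserts. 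For (ii), you invoke Clifford's theorem (the trivial character is $G$-fixed, so its appearance forces $\rho|_A=(\on{dim}\rho)\,\rho_0$), whereas the paper gets the same dichotomy more elementarily: the invariant part $\rho^A$ is a $G$-submodule because $A\lhd G$, hence $\rho^A=0$ or $\rho^A=\rho$ by irreducibility; it then closes the backward direction by contradiction via the count $\theta_A(\rho|_A)=\on{dim}\rho-|A|\cdot\on{dim}\rho^A$, while you argue both directions directly. Your explicit flag that strictness of $\theta_A(\rho_0)<0$ needs $A\neq 1$ is a hypothesis the paper uses silently ($1-|A|<0$). Net: the same mathematical content; your version is marginally more general in the stability condition and slightly heavier in representation-theoretic input, and both are complete.
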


For convenience we denote the set of nontrivial irreducible representations of a group $H$ by $\on{Irr}^\star(H)$. We also write $\rho^A$ for the $A$-invariant part of a $G$-representation $\rho$, and $\rho_0$ for the trivial representation.

\begin{proof} It is irrelevant which zero-generated stability condition $\theta_A$ we choose for $A$ and $T$ and so we use
$$\theta_A(\rho)=\on{dim}{\rho}\text{ for $\rho\in\on{Irr}^\star(G)$}$$
We have
$$0=\theta_A(\C[A])=\theta_A(\rho_0)+\theta_A(\bigoplus_{\rho\in\on{Irr}^\star{G}}\on{dim}{\rho}\cdot\rho)=\theta_A(\rho_0)+\sum_{\rho\in\on{Irr}^\star(G)}(\on{dim}{\rho})^2=\theta_A(\rho_0)+|A|-1$$
and so $\theta_A(\rho_0)=1-|A|$. This stability condition has the benefit that $\theta_A(\psi)=\on{dim}{\psi}$ for any representation $\psi$ of $A$ with no trivial summand. It is clear that choosing $\eps$ sufficiently small causes $\vartheta$ to be negative on characters lifted from $T$ since $\theta_A(\rho|_A)<0$ for such $\rho$. Thus, suppose $\rho\in\on{Irr}^\star{G}$ is not lifted from $T$ and yet $\vartheta(\rho)<0$. Then
\begin{equation} \label{eqn:rep} \tag{$*$}
0>\theta_A(\rho|_A)=\theta_A(\rho^A)+\theta_A(\rho|_A/\rho^A)=\on{dim}{\rho^A}\cdot(1-|A|)+\on{dim}{\rho/\rho^A}=\on{dim}{\rho}-\on{dim}{\rho^A}\cdot |A|
\end{equation}
since $\rho^A$ is a trivial $A$-module. Note that $\rho^A$ is also a $G$-module hence, since $\rho$ is irreducible, either $\rho^A=\rho$ in which case $\rho\in\on{Irr}(T)$, or $\rho^A=0$ in which case
$$\on{dim}{\rho}>0=|A|\cdot\on{dim}{\rho^A}$$
and so the inequality (\ref{eqn:rep}) cannot be satisfied.
\end{proof}

This lemma states that the representations where the sign of $\vartheta$ differs from a stability condition in $\Theta^+(Q_G,\ub{d})$ defining $G\hilb\C^3$ are exactly those lifted from $T$. We take this much further in the following conjecture.

\begin{conjecture} \label{conj:to_it} Let $G\subseteq\on{SL}_3(\C)$ be abelian, and let $A\lhd G$ be a normal subgroup with quotient $T=G/A$. There is a path $\gamma\colon[0,1]\to\Theta(Q_G,\ub{d})$ passing through walls $\mfk{w}_1,\dots,\mfk{w}_m$ that satisfies the conditions in \S\ref{sec:wall} such that:
\begin{enumerate}
\item $\gamma(0)\in\mfk{C}_0$
\item $\M_{\gamma(1)}(Q_G,\ub{d})\cong T\hilb A\hilb\C^3$
\item All nontrivial irreducible representations lifted from $T$ are contained in $\bigcup_{i=1}^m\chi(\mfk{w}_i)$.
\end{enumerate}
\end{conjecture}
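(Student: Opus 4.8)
The plan is to reduce condition (3) to a statement about the signs of the coordinate functions $\theta\mapsto\theta(\rho)$ and then to recover those sign changes from the labels $\chi(\mfk{w}_i)$. The structural input I would use is that $\mfk{C}_0$ is the positive orthant $\Theta^+(Q_G,\ub{d})$ in the coordinates $(\theta(\rho))_{\rho\in\on{Irr}^\star(G)}$, where $\theta(\rho_0)$ is fixed by $\theta(\ub{d})=0$. Each coordinate hyperplane $H_\rho:=\{\theta:\theta(\rho)=0\}$ is then a facet of $\mfk{C}_0$ (it is a genuine wall, the destabilising subrepresentation being the simple one supported at the vertex $\rho$), so $H_\rho$ lies in the wall arrangement and $\on{sgn}\theta(\rho)$ is constant on every chamber. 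Fix $\rho\in\on{Irr}^\star(T)$: on $\mfk{C}_0$ we have $\theta(\rho)>0$, while Lemma~\ref{lem:rep} gives $\vartheta(\rho)<0$ at the endpoint. Since $\on{sgn}\theta(\rho)$ is a chamber invariant, any path $\gamma$ satisfying (1) and (2) must cross a wall lying in $H_\rho$; crucially this conclusion depends only on the signs at the two endpoints, not on the route. Thus for every nontrivial representation lifted from $T$ there is a crossed wall $\mfk{w}_i$ with $\mfk{w}_i\subseteq H_\rho$.

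What remains is the compatibility statement $(\star)$: for each $\rho\in\on{Irr}^\star(T)$, at least one crossed wall lying in $H_\rho$ has $\rho\in\chi(\mfk{w}_i)$. Granting $(\star)$, every nontrivial $T$-lift lies in some $\chi(\mfk{w}_i)$ and condition (3) follows at once. When the relevant crossing is a wall of type \texttt{0} or \texttt{I}, $(\star)$ is precisely Reid's recipe as repackaged in the definition of $\chi$: the divisor or curve destabilised there is the stratum labelled by $\rho$. Existence of a path meeting (1), (2) and the genericity hypotheses of \S\ref{sec:wall} is soft -- take a generic segment from an interior point of $\mfk{C}_0$ to $\vartheta$ (it meets each wall hyperplane at most once, hence visits distinct chambers) and, where it meets a type \texttt{III} wall, replace the crossing as in \cite[\S8]{ci_flo_04}. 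By the sign constraint the coordinate hyperplanes this path is forced to cross are exactly the $H_\rho$ with $\rho\in\on{Irr}^\star(T)$; in the favourable case where each is crossed by a single type \texttt{I} flop, the propagation rule yields $\chi(\mfk{w}_i)=\{\rho\}$ and the argument is complete.

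The main obstacle is $(\star)$ for the \emph{deep} walls, i.e.~understanding Reid's recipe on all the resolutions $\M_i$ traversed by $\gamma$ rather than on $G\hilb\C^3$ alone. Two things can go wrong: the path might meet $H_\rho$ only at walls of type \texttt{0'}, whose label $\chi$ is empty, and even at a type \texttt{0}/\texttt{I} wall the representation that the propagation of Reid's recipe attaches to $\mfk{w}_i$ must be shown to equal the coordinate $\rho$ in whose hyperplane $\mfk{w}_i$ sits. Equivalently, the tautological datum detecting $\theta(\rho)$ should degenerate along the $\rho$-labelled stratum in every resolution met by $\gamma$; this holds for $\mfk{C}_0$ by \cite{cra_exp_05,wor_wal_20} but is not known in general, which is precisely why the statement is only conjectural. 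In any fixed case it can be verified by hand: compare the Craw--Reid triangulation of $G\hilb\C^3$ with the Ishii--Ito--Nolla triangulation of $T\hilb A\hilb\C^3$, list the finitely many edges to be flipped, and check that the Reid labels of the corresponding curves exhaust $\on{Irr}^\star(T)$. For $G=\frac{1}{6}(1,2,3)$ with $A\cong\Z/2$ this is the content of Examples~\ref{ex:16_wall} and~\ref{ex:16_it}, where the two flopped curves carry the labels $2$ and $4$ -- exactly the nontrivial characters of $T\cong\Z/3$.
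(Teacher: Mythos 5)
The statement you are proving is Conjecture~\ref{conj:to_it}: the paper offers no proof of it, only the partial result Prop.~\ref{prop:partial} with its corollary and some worked examples. Your attempt, by your own admission, does not prove it either -- everything is funnelled into the compatibility statement $(\star)$, which you concede is open and which is exactly the content of the conjecture. So the genuine gap you must name is the one you yourself name: there is no Reid's recipe on the intermediate resolutions $\M_i$, hence no mechanism forcing a crossed wall sitting inside $H_\rho$ to carry the label $\rho$ (indeed type \texttt{0'} walls have $\chi=\emptyset$ by definition, so a path meeting $H_\rho$ only at such walls would violate condition (3) even though the sign of $\theta(\rho)$ flips). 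Your closing remark that any fixed case can be settled by comparing the Craw--Reid triangulation of $G\hilb\C^3$ with the Ishii--Ito--Nolla triangulation of $T\hilb A\hilb\C^3$ and checking the Reid labels of the flipped edges is precisely the paper's Prop.~\ref{prop:partial} and its corollary, including the $\frac{1}{6}(1,2,3)$ verification via Examples~\ref{ex:16_wall} and~\ref{ex:16_it}; to that extent your reduction runs parallel to the paper's partial results.

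Beyond the acknowledged gap, however, your structural premise is false. You assert that $\mfk{C}_0$ \emph{is} the positive orthant $\Theta^+(Q_G,\ub{d})$ with the coordinate hyperplanes $H_\rho$ as its facets. The paper defines $\mfk{C}_0$ only as the chamber \emph{containing} $\Theta^+(Q_G,\ub{d})$, and in general the containment is strict: Thm.~\ref{thm:worm} (from \cite{wor_wal_20}) classifies the walls of $\mfk{C}_0$ as indexed by exceptional divisors, $(-1,-1)$-curves, generalised long sides and reducible divisors -- these are neither in bijection with $\on{Irr}^\star(G)$ nor all of the form $\theta(\rho)=0$ (divisor-type walls lie in hyperplanes cut out by sums of characters; the number of walls typically exceeds $\dim\Theta(Q_G,\ub{d})$, which is impossible for an orthant). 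Consequently your key intermediate claim -- that $\on{sgn}\,\theta(\rho)$ is constant on every chamber, so that any path from $\mfk{C}_0$ to $\vartheta$ is \emph{forced} to cross a wall lying in $H_\rho$ for each $\rho\in\on{Irr}^\star(T)$ -- is unjustified. It would require that the actual GIT walls cover $H_\rho$ throughout the parameter space, i.e.\ that in every chamber some semistable module admits a sub- or quotient module of dimension vector $e_\rho$; this is precisely the kind of deep-chamber information (a Reid's recipe on each $\M_i$) that is unavailable, and it cannot be deduced from Lemma~\ref{lem:rep}, which only controls signs at the endpoint $\vartheta$. So even the soft-looking topological step of your argument secretly presupposes the hard part, and the proposal should be regarded as a plausible strategy plus a correct treatment of the examples, not a proof.
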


We are curious if anything stronger is true especially when $T$ is small, where it may be possible that there is a path from $G\hilb\C^3$ to $T\hilb A\hilb\C^3$ for which `most' of the walls it crosses are labelled with a lifted character from $T$. One can formulate a version of this conjecture for longer chains of normal subgroups and the corresponding iterated Hilbert schemes.

\subsection{Partial results and remarks} \label{sec:ex} We conclude this section with some glimpses of how one might approach Conjecture \ref{conj:to_it}.

\begin{prop} \label{prop:partial} Let $G/A=T$ as above. Suppose edges $l_1,\dots,l_p$ labelled with representations $\rho_1,\dots,\rho_p$ in the triangulation for $G\hilb\C^3$ are not in the triangulation for $T\hilb A\hilb\C^3$. Then there is a path $\gamma$ satisfying the conditions of Conjecture \ref{conj:to_it} with $\{\rho_1,\dots,\rho_p\}\subseteq\chi(\gamma)$.
\end{prop}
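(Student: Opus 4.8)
The plan is to produce the path $\gamma$ directly and then read off $\chi(\gamma)$ from the combinatorics of the intermediate triangulations, rather than trying to engineer the labels one at a time. First I would realise $T\hilb A\hilb\C^3$ as a chamber moduli space: by \cite{iin_gnh_13} the stability condition $\vartheta$ realises $T\hilb A\hilb\C^3$, and taking $\vartheta$ generic it lies in a chamber $\mfk{C}'$ with $\M_{\mfk{C}'}\cong T\hilb A\hilb\C^3$, so condition (2) of Conjecture \ref{conj:to_it} can be met. Since $\Theta(Q_G,\ub{d})$ is a vector space subdivided into chambers, $\mfk{C}_0$ and $\mfk{C}'$ are joined by a path; a generic such path meets walls one at a time and visits distinct chambers, and by \cite[\S8]{ci_flo_04} it may be chosen to avoid all walls of type \texttt{III}. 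This yields a $\gamma$ meeting the conditions of \S\ref{sec:wall} together with (1) and (2). By \cite[Prop.~6.1]{ci_flo_04} every wall it crosses then has unstable locus either a single $(-1,-1)$-curve (type \texttt{I}, a flop) or an exceptional divisor (types \texttt{0}, \texttt{0'}), and in the latter case the birational map is an isomorphism.

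The key observation is that only the type \texttt{I} walls alter the triangulation, each one flipping a single edge, while the type \texttt{0} and \texttt{0'} crossings leave it unchanged. Writing $\mathcal{T}_0=\mathcal{T}_{G\hilb},\mathcal{T}_1,\dots,\mathcal{T}_m=\mathcal{T}_{T\hilb A\hilb}$ for the induced triangulations, each edge $l_i$ lies in $\mathcal{T}_0$ and not in $\mathcal{T}_m$, so there is a first index $j$ at which a flip removes $l_i$. I would then argue that at this flip $l_i$ is still the \emph{original} curve: it lies in every triangulation $\mathcal{T}_0,\dots,\mathcal{T}_{j-1}$ and has never itself been flipped, so the flops performed so far -- all of \emph{other} edges -- identify it, as a toric curve, with the edge $l_i\subseteq G\hilb\C^3$ carrying the Reid's-recipe label $\rho_i$. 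Because $l_i$ was already present before the preceding wall $\mfk{w}_{j-1}$, it is not the curve produced by that wall, so the labelling rule of \S\ref{sec:wall} places us in the `different curve' case and gives $\chi(\mfk{w}_j)=\{\rho_i\}$.

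Running this over all $i$ shows $\{\rho_1,\dots,\rho_p\}\subseteq\chi(\gamma)$, which is the assertion. The step I expect to need the most care -- and would write out in full -- is the claim that an as-yet-unflipped edge is identified with its namesake in $G\hilb\C^3$ together with its label: concretely, that the intervening flops of edges other than $l_i$ move neither $l_i$ as a lattice segment nor the representation attached to it under the propagation of Reid's recipe, which is where the genericity of $\gamma$ and the isomorphism statement for type \texttt{0}/\texttt{0'} walls enter. The remaining obstacle, more routine given the references, is the existence of a type-\texttt{III}-free path reaching $\mfk{C}'$, taken from \cite[\S8]{ci_flo_04}. Finally I would remark that this argument captures exactly the labels carried by edges that \emph{differ} between the two triangulations, and so falls short of Conjecture \ref{conj:to_it} precisely to the extent that a lifted representation from $T$ might label an edge common to both triangulations, or no edge at all; closing that gap is the genuine content left in the conjecture.
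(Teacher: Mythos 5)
Your proposal is correct and takes essentially the same route as the paper: both rest on \cite[\S8]{ci_flo_04} to produce a type-\texttt{III}-free path from $\mfk{C}_0$ to a chamber realising $T\hilb A\hilb\C^3$, together with the observation that any edge present in the triangulation for $G\hilb\C^3$ but absent from that for $T\hilb A\hilb\C^3$ must be flipped along the way, so the wall realising its (first) flop carries the label $\rho_i$ under the propagation rule of \S\ref{sec:wall}. Your ``first flip'' bookkeeping is simply a careful spelling-out of the paper's opening remark that the curves corresponding to the $l_i$ must be flopped, with the paper constructing the path to realise those flops by fiat where you deduce them from an arbitrary generic path to the target chamber.
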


\begin{proof} We note that the curves corresponding to the edges $l_i$ must be flopped (possibly multiple times) in order to obtain the triangulation for $T\hilb A\hilb\C^3$ from the triangulation for $G\hilb\C^3$. We thus apply the methods of \cite[\S8]{ci_flo_04} to find a path $\gamma$ passing through walls $\mfk{w}_1,\dots,\mfk{w}_q$ such that:
\begin{itemize}
\item for each $j=1,\dots,p$ there is $i_j\in\{1,\dots,q\}$ such that crossing $\mfk{w}_{i_j}$ realises a flop in the curve corresponding to the edge $l_j$,
\item every other wall that $\gamma$ passes through is type \texttt{0}, \texttt{0'}, or \texttt{I}.
\end{itemize}
By construction $\chi(\mfk{w}_{i_j})=\{\rho_j\}$, which gives the result.
\end{proof}

\begin{cor} Continuing the notation of Prop.~\ref{prop:partial}, if all the representations lifted from $T$ label edges in the triangulation for $G\hilb\C^3$ that are not in the triangulation for $T\hilb A\hilb\C^3$, then Conjecture \ref{conj:to_it} is true for $G$.
\end{cor}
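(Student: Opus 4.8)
The plan is to read the corollary as a direct specialisation of Proposition \ref{prop:partial}: its hypothesis is engineered precisely so that the set $\{\rho_1,\dots,\rho_p\}$ supplied by that proposition already exhausts the characters demanded by condition (3) of Conjecture \ref{conj:to_it}.

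Concretely, I would first take $l_1,\dots,l_p$ to be all the edges occurring in the triangulation for $G\hilb\C^3$ but not in that for $T\hilb A\hilb\C^3$, and let $\rho_1,\dots,\rho_p$ be their Reid's-recipe labels. The hypothesis of the corollary says exactly that every $\rho\in\on{Irr}^\star(T)$ labels one of these removed edges, that is, $\on{Irr}^\star(T)\subseteq\{\rho_1,\dots,\rho_p\}$. I would then invoke Proposition \ref{prop:partial} to obtain a path $\gamma$ that starts in $\mfk{C}_0$, ends at a chamber with $\M_{\gamma(1)}\cong T\hilb A\hilb\C^3$ (conditions (1) and (2)), and satisfies $\{\rho_1,\dots,\rho_p\}\subseteq\chi(\gamma)$. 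Chaining the two inclusions gives $\on{Irr}^\star(T)\subseteq\chi(\gamma)$, which is exactly condition (3); hence $\gamma$ witnesses the conjecture for $G$.

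The one point needing care --- rather than a genuine obstacle --- is the bookkeeping of what Proposition \ref{prop:partial} actually delivers. I would read its conclusion as furnishing conditions (1) and (2) of the conjecture together with the containment $\{\rho_1,\dots,\rho_p\}\subseteq\chi(\gamma)$, so that it is the corollary's hypothesis, and not the proposition alone, that upgrades this partial statement to the full condition (3). I would also observe that only $\on{Irr}^\star(T)$ is at issue: the trivial character lifted from $T$ is the trivial $G$-representation, which plays no role in condition (3) and by Lemma \ref{lem:rep} is not among the sign-changing characters either, so it requires no separate treatment.
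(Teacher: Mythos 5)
Your proposal is correct and matches the paper's (implicitly omitted) argument exactly: the corollary is a direct specialisation of Prop.~\ref{prop:partial}, taking the $l_i$ to be all edges of the $G\hilb\C^3$ triangulation absent from that of $T\hilb A\hilb\C^3$, so that the hypothesis $\on{Irr}^\star(T)\subseteq\{\rho_1,\dots,\rho_p\}$ combines with $\{\rho_1,\dots,\rho_p\}\subseteq\chi(\gamma)$ to yield condition (3). Your bookkeeping remarks --- that the proposition itself supplies conditions (1) and (2) while only the corollary's hypothesis upgrades to (3), and that the trivial character needs no attention --- are accurate and consistent with the paper.
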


From Ex.~\ref{ex:16_wall} and Ex.~\ref{ex:16_it} this applies to both decompositions of $\frac{1}{6}(1,2,3)$ and to several other examples, such as when $A$ is any subgroup of $G=\frac{1}{30}(2,3,25)$ except $\frac{1}{2}(0,1,1)$. We note however that there are many examples where representations lifted from a quotient label divisors, such as for $G=\frac{1}{25}(1,3,21)$ and $A=\frac{1}{5}(1,3,1)$ as can be seen from \cite[Fig.~29]{wor_wal_20}.

We next interpret and verify Conjecture \ref{conj:to_it} for several polyhedral subgroups. A finite subgroup of $\on{SL}_3(\C)$ is \emph{polyhedral} if it is conjugate to a subgroup of $\on{SO}(3)$. The conjugacy classes of these subgroups are classified by the ADE Dynkin diagrams; the groups themselves are discussed in \cite[\S2]{ns_flo_17}.

\begin{example} We first consider the subgroup $G$ of type $D_{2n}$ with its index $2$ subgroup $A\cong\Z/n$. As usual, denote the quotient $G/A=T\cong\Z/2$. We can compute the stability condition $\vartheta$ from \cite[Def.~2.4]{iin_gnh_13} giving $T\hilb A\hilb\C^3$. We see, as expected from Lemma \ref{lem:rep} that $\vartheta$ is only negative on the two one dimensional representations of $G$ that are lifted from $T$. Write $L$ for the nontrivial such representation. We use the explicit calculation of chambers in \cite[Thm.~6.4(i)-(ii)]{ns_flo_17}.

When $n$ is odd we see that $\vartheta$ lives in the chamber adjacent to $\mfk{C}_0$ found by crossing the wall $\mfk{w}$ with unstable locus the curve $C$ corresponding to $L$. By the heuristic of \S\ref{sec:wall} it is very reasonable to label the wall with the representation $L$, and hence immediately conclude that this version of Conjecture \ref{conj:to_it} holds for such subgroups.

In the case when $n$ is even a similar calculation can be implemented to show that $\vartheta$ resides in a chamber adjacent to $\mfk{C}_0$ found by crossing a wall that realises the flop in the curve corresponding to the nontrivial irreducible representation lifted from $T$, hence giving an appropriately modified version of Conjecture \ref{conj:to_it} as for odd $n$.

The last case we treat, following \cite{ns_flo_17}, is the tetrahedral group $G\subseteq\on{SO}(3)$ of order $12$. This has a normal subgroup $A=\frac{1}{2}(1,1,0)\times\frac{1}{2}(1,0,1)\cong\Z/2\times\Z/2$ with quotient $T\cong\Z/3$. The three $1$-dimensional representations of $G$ are those lifted from $T$ and so the stability condition $\vartheta$ describing $T\hilb A\hilb\C^3$ is negative on them. \cite[Thm.~6.4(iii)]{ns_flo_17} describes the chamber structure in this case. We see from this that the resolution $X_{12}$ (in Nolla--Sekiya's notation) obtained by flopping the curves corresponding to the two nontrivial representations lifted from $T$ in any order is isomorphic to $T\hilb A\hilb\C^3$. Thus again we have a suitable version of Conjecture \ref{conj:to_it}.
\end{example}

\begin{remark} A large family of examples that would be interesting to study are the trihedral groups \cite{ito_cre_95,len_mck_02,wor_non_15} of which the tetrahedral group is the smallest. These are groups of the form $A\rtimes T$ where $T\cong\Z/3$ is generated by
$$\tau=\mat{ccc}
0 & 1 & 0 \\
0 & 0 & 1 \\
1 & 0 & 0\tam$$
Recent, as-yet-unpublished work of Nolla constructs a version of Reid's recipe for the next smallest trihedral group $\frac{1}{7}(1,2,4)\rtimes T$, and Ito--Takahashi have built $T\hilb A\hilb\C^3$ for this example. A natural next step is to compare these through the lens of Conjecture \ref{conj:to_it}.
\end{remark}

\begin{remark} \label{rem:mutation} One of the tools that supports extra insight in the polyhedral case is a bijection between crepant resolutions of $\C^3/G$ and mutations of a `quiver with potential' $(Q,W)$ -- see \cite{dwz_qui_08,dwz_qui_10} for much of the background -- such that mutation in a vertex corresponds to flopping a corresponding curve. This quiver with potential is constructed from the representation theory of $G$; indeed, $Q=Q_G$ is just the McKay quiver.

It is natural to conjecture that there is a quiver with potential $(Q,W)$ whose mutations similarly biject with the crepant resolutions of $\C^3/G$ for abelian (or even all) subgroups $G\subseteq\on{SL}_3(\C)$. In the abelian case it is equivalent to find a quiver with potential whose mutations are in bijection with unimodular triangulations of the junior simplex, and for which mutation in a vertex corresponds to flipping a corresponding edge. One might compare the various works in the mainline theory of cluster algebras constructing quivers (sometimes with potential) to capture triangulations of marked surfaces, for instance \cite{lab_tri_16,fst_clu_08,ft_clu_08}.
\end{remark}

\begin{remark} Both Conjecture \ref{conj:to_it} and the conjecture of Rem.~\ref{rem:mutation} have analogues for more general Gorenstein toric singularities via the theory of \emph{dimer models} \cite{bcv_geo_15,iu_dim_16}, which may even be a more amenable setting in which to study them; especially with the recent developments in Reid's recipe for dimer models \cite{cht_com_20}.
\end{remark}

\subsection*{Acknowledgements} The author is grateful for helpful conversations with Alastair Craw, Tom Ducat, \'Alvaro Nolla de Celis, Yukari Ito, Jonathan Lai, and Tim Magee, and to the organisers of the conference `The McKay Correspondence, Mutations, and Related Topics' hosted remotely by Kavli IPMU in July-August 2020 for providing an excellent opportunity -- especially given the backdrop of the \textsc{covid-19} pandemic -- to discuss and share some of these ideas.

\bibliographystyle{acm}
\bibliography{bw}

\begin{thebibliography}{10}

\bibitem{av_ref_85}
{\sc Artin, M., and Verdier, J.-L.}
\newblock Reflexive modules over rational double points.
\newblock {\em Mathematische Annalen 270}, 1 (1985), 79--82.

\bibitem{bd_stro_96}
{\sc Batyrev, V.~V., and Dais, D.~I.}
\newblock Strong {McKay} correspondence, string-theoretic {Hodge} numbers and
  mirror symmetry.
\newblock {\em Topology 35}, 4 (1996), 901--929.

\bibitem{bk_mck_04}
{\sc Bezrukavnikov, R.~V., and Kaledin, D.~B.}
\newblock {McKay} equivalence for symplectic resolutions of quotient
  singularities.
\newblock {\em Tr. Mat. Inst. Steklova 246\/} (2004), 20--42.

\bibitem{bcv_geo_15}
{\sc Bocklandt, R., Craw, A., and V{\'e}lez, A.~Q.}
\newblock Geometric {Reid's} recipe for dimer models.
\newblock {\em Mathematische Annalen 361}, 3-4 (2015), 689--723.

\bibitem{bsw_sup_10}
{\sc Bocklandt, R., Schedler, T., and Wemyss, M.}
\newblock Superpotentials and higher order derivations.
\newblock {\em Journal of pure and applied algebra 214}, 9 (2010), 1501--1522.

\bibitem{bkr_mck_01}
{\sc Bridgeland, T., King, A., and Reid, M.}
\newblock The {McKay} correspondence as an equivalence of derived categories.
\newblock {\em Journal of the American Mathematical Society 14}, 3 (2001),
  535--554.

\bibitem{cls_tor_11}
{\sc Cox, D.~A., Little, J.~B., and Schenck, H.~K.}
\newblock {\em Toric varieties}.
\newblock American Mathematical Soc., 2011.

\bibitem{cra_mck_01}
{\sc Craw, A.}
\newblock {\em The {McKay} correspondence and representations of the {McKay}
  quiver}.
\newblock PhD thesis, University of Warwick, 2001.

\bibitem{cra_exp_05}
{\sc Craw, A.}
\newblock An explicit construction of the {McKay} correspondence for {$A$-Hilb
  $\mathbb{C}^3$}.
\newblock {\em Journal of Algebra 285}, 2 (2005), 682--705.

\bibitem{cht_com_20}
{\sc Craw, A., Heuberger, L., and Amador, J.~T.}
\newblock Combinatorial {Reid's} recipe for consistent dimer models.
\newblock {\em arXiv preprint arXiv:2001.07506\/} (2020).

\bibitem{ci_flo_04}
{\sc Craw, A., and Ishii, A.}
\newblock Flops of {$G$-Hilb} and equivalences of derived categories by
  variation of {GIT} quotient.
\newblock {\em Duke Mathematical Journal 124}, 2 (2004), 259--307.

\bibitem{cr_how_02}
{\sc Craw, A., and Reid, M.}
\newblock How to calculate {$A$-Hilb $\mathbb{C}^3$}.
\newblock In {\em S{\'e}minaires \& Congr{\'e}s\/} (2002), Citeseer.

\bibitem{dec_dih_12}
{\sc De~Celis, A.~N.}
\newblock Dihedral {$G$-Hilb} via representations of the {McKay} quiver.
\newblock {\em Proceedings of the Japan Academy, Series A, Mathematical
  Sciences 88}, 5 (2012), 78--83.

\bibitem{dwz_qui_08}
{\sc Derksen, H., Weyman, J., and Zelevinsky, A.}
\newblock Quivers with potentials and their representations {I}: Mutations.
\newblock {\em Selecta Mathematica 14}, 1 (2008), 59--119.

\bibitem{dwz_qui_10}
{\sc Derksen, H., Weyman, J., and Zelevinsky, A.}
\newblock Quivers with potentials and their representations {II}: applications
  to cluster algebras.
\newblock {\em Journal of the American Mathematical Society 23}, 3 (2010),
  749--790.

\bibitem{fst_clu_08}
{\sc Fomin, S., Shapiro, M., and Thurston, D.}
\newblock Cluster algebras and triangulated surfaces. part i: Cluster
  complexes.
\newblock {\em Acta Mathematica 201}, 1 (2008), 83--146.

\bibitem{ft_clu_08}
{\sc Fomin, S., and Thurston, D.}
\newblock Cluster algebras and triangulated surfaces. part ii: Lambda lengths.
\newblock {\em arXiv preprint arXiv:1210.5569\/} (2008).

\bibitem{iin_gnh_13}
{\sc Ishii, A., Ito, Y., and de~Celis, A.~N.}
\newblock On {$G/N$-Hilb} of {$N$-Hilb}.
\newblock {\em Kyoto Journal of Mathematics 53}, 1 (2013), 91--130.

\bibitem{iu_dim_16}
{\sc Ishii, A., and Ueda, K.}
\newblock Dimer models and the special {McKay} correspondence.
\newblock {\em Geometry \& Topology 19}, 6 (2016), 3405--3466.

\bibitem{ito_cre_95}
{\sc Ito, Y.}
\newblock Crepant resolution of trihedral singularities and the orbifold
  {Euler} characteristics.
\newblock {\em International Journal of Mathematics 6}, 1 (1995), 33--44.

\bibitem{in_mck_00}
{\sc Ito, Y., and Nakajima, H.}
\newblock {McKay} correspondence and {Hilbert} schemes in dimension three.
\newblock {\em Topology 6}, 39 (2000), 1155--1191.

\bibitem{lab_tri_16}
{\sc Labardini-Fragoso, D.}
\newblock On triangulations, quivers with potentials and mutations.
\newblock {\em Mexican Mathematicians Abroad 657\/} (2016), 103.

\bibitem{len_mck_02}
{\sc Leng, R.~C.}
\newblock {\em The {McKay} correspondence and orbifold {Riemann-Roch}}.
\newblock PhD thesis, University of Warwick, 2002.

\bibitem{mck_car_81}
{\sc McKay, J.}
\newblock Cartan matrices, finite groups of quaternions, and {Kleinian}
  singularities.
\newblock {\em Proceedings of the American Mathematical Society 81}, 1 (1981),
  153--154.

\bibitem{nak_hil_01}
{\sc Nakamura, I.}
\newblock Hilbert schemes of abelian group orbits.
\newblock {\em Journal of Algebraic Geometry 10}, 4 (2001), 757--780.

\bibitem{ns_flo_17}
{\sc Nolla~de Celis, {\'A}., and Sekiya, Y.}
\newblock Flops and mutations for crepant resolutions of polyhedral
  singularities.
\newblock {\em Asian Journal of Mathematics 21}, 1 (2017), 1--46.

\bibitem{rei_mck_97}
{\sc Reid, M.}
\newblock {McKay} correspondence.
\newblock In {\em Proceedings of {Algebraic} {Geometry} symposium ({Kinosaki},
  Nov 1996)\/} (1997), pp.~14--41.

\bibitem{rei_cor_02}
{\sc Reid, M.}
\newblock La correspondance de {McKay}.
\newblock {\em Asterisque-Societe Mathematique de France 276\/} (2002), 53--72.

\bibitem{wil_kah_92}
{\sc Wilson, P. M.~H.}
\newblock The {K{\"a}hler} cone on {Calabi--Yau} threefolds.
\newblock {\em Inventiones mathematicae 107}, 1 (1992), 561--583.

\bibitem{wor_non_15}
{\sc Wormleighton, B.}
\newblock On the nonabelian {McKay} correspondence.
\newblock Master's thesis, University of Warwick, 2015.

\bibitem{wor_wal_20}
{\sc Wormleighton, B.}
\newblock Walls for {$G$-Hilb} via {Reid's} recipe.
\newblock {\em Symmetry, {Integrability} and {Geometry}: {Methods} and
  {Applications} 16\/} (2020), 106--144.

\end{thebibliography}

\end{document}